\documentclass[a4paper,11pt,twoside,notitlepage,reqno]{amsart}

\usepackage{amsmath,amssymb,amsbsy,amsthm}
\usepackage[hmargin=1.4in,vmargin=1.4in]{geometry}
\usepackage{url}

\usepackage{tikz}
\usepackage[linktocpage]{hyperref}
\hypersetup{
    colorlinks=true,        
    linkcolor=red,          
    citecolor=red,         
    filecolor=magenta,      
    urlcolor=cyan           
}


\addtocontents{toc}{\protect\setcounter{tocdepth}{1}}

\usepackage{eucal}


\newcommand{\F}{ \mathbb{F} }



\theoremstyle{plain}
	\newtheorem{thm}{Theorem}
	
		\numberwithin{thm}{section}
	\newtheorem{lemma}[thm]{Lemma}
	\newtheorem{prop}[thm]{Proposition}

    \newtheorem{prob}[thm]{Problem}

	\newtheorem*{thm*}{Theorem}
	\newtheorem*{lemma*}{Lemma}
	\newtheorem*{prop*}{Proposition}
	\newtheorem*{cor*}{Corollary}
	\newtheorem*{conj*}{Conjecture}

\theoremstyle{definition}
	\newtheorem{example}[thm]{Example}
	\newtheorem*{example*}{Example}

	\newtheorem{remark}[thm]{Remark}


\begin{document}

\title[Companion weakly periodic matrices over fields]{Companion weakly periodic matrices over \\ finite and countable fields}

\author[P. Danchev]{Peter Danchev}
\address{Institute of Mathematics and Informatics, Bulgarian Academy of Sciences, ``Acad. G. Bonchev" str., bl. 8, 1113 Sofia, Bulgaria}
\email{danchev@math.bas.bg; pvdanchev@yahoo.com}

\author[A. Pojar]{Andrada Pojar}
\address{The Technical University of Cluj-Napoca, Departament of Mathematics, Str. Memorandumului 28, 400114, Cluj-Napoca, Romania}
\email{andradapojar@gmail.com}

\keywords{companion matrices, fields, potents, square-zero nilpotents}
\subjclass[2010]{Primary 15A23, 15B33; Secondary 16S50, 16U60}

\begin{abstract} We explore the situation where all companion $n\times n$ matrices over a field $\F$ are weakly periodic of index of nilpotence $2$ and prove that this can be happen uniquely when $\F$ is a countable field of positive characteristic, which is an algebraic extension of its minimal simple (finite) subfield, with all subfields of order greater than $n$. In particular, in the commuting case, we show even that $\F$ is a finite field of order greater than $n$.

Our obtained results somewhat generalize those obtained by Breaz-Modoi in Lin. Algebra \& Appl. (2016).
\end{abstract}

\maketitle

\section{Introduction and Principalities}

Let $\F$ be a field and $n$ an arbitrary non-negative integer, say $n\geq 1$. We denote by $M_n(\F)$ the matrix ring consisting of all squared $n\times n$ matrices over $\F$. For a non-negative integer $m\geq 2$, we denote by $\tau$ the cycle $\tau_m=(1 ~ 2 ~ \ldots ~ m) \in S_m$, and by $P_{\tau_m}$ the $m\times m$ permutation matrix with only $0$s and $1$s over $\F$, that is, $P_{\tau_m}=(a_{ij})_{1\leq i,j \leq m}$, where $(a_{ij})=1$ if $j=\tau_m(i)$, and $(a_{ij})=0$ if $j\neq\tau_m(i)$.

The matrix $M\in M_n(\F)$ is called {\it potent} if there exists an integer $k\geq 1$ such that $M^{k+1}=M$. So, let $ST_n$ the set of traces of all potent companion matrices $C\in M_n(\F)$. By a root of unity, we denote a root of the polynomial over $\F$ which is of the form $g=X^i-1$, where $i$ is an integer with $i \geq 2$. Thus, let $SR_n$ be the set of sums of at most $n$-th roots of unity which are not necessarily distinct.

Likewise, we denote by $L_{m,n}$ the set of polynomials having degree at most $m$ and with non-negative integer multiples of unity coefficients such that their sum is not exceeding $n$. We also denote $W_m=\cup_{f \in L_{m,n}} Spec(f(P_{\tau_m}))$, and the symbol $d(m)$ stands for the number of non-negative divisors of $m$.

Let $R$ be a ring. An element $x\in R$ is said to be {\it potent} if there exists a non-negative integer $q\geq 1$ with $x^{q+1}=a$, and $x\in R$ is said to be {\it weakly periodic with nilpotence index $2$}, provided $x$ is the sum of a potent and a square-zero nilpotent of $R$. Furthermore, we shall say that the ring $R$ is {\it weakly periodic with nilpotence index $2$}, provided every element of $R$ is weakly periodic with nilpotence index $2$.

Historically, the concept of weak periodicity arisen quite normally in the existing on the subject literature. In fact, it was showed in \cite{CD} that an element $x$ of a ring $R$ is {\it periodic}, i.e., $x^n=x^m$ for some two different positive integers $m,n$, if and only if $x$ can be written as a sum of a potent element and a nilpotent element which commute each other. Thus, by removing the "commuting property", it is rather natural to consider the sum of such two elements. In addition, a ring $R$ is called {\it weakly periodic} if all its elements are weakly periodic.

On the other hand, Diesl defined in \cite{D} the notion of a {\it nil-clean} ring $R$ as the ring for which, for every $a\in R$, there are an idempotent $e$ and a nilpotent $b$ such that $a=e+b$. Moreover, Ye introduced in \cite{Y} the notion of {\it semi-clean} ring $R$ as the ring for which, for each $a\in R$, there are a potent $c$ and a unit $u$ such that $a=c+u$. Henceforth, it is pretty obvious that the weakly periodic rings are situated between nil-clean and semi-clean rings. So, they really need a detailed exploration to which is devoted the present article being our basic motivation. Our major purpose is to decide what is the power structure of the base field, provided that all companion matrices over it are either commuting weakly periodic with index of nilpotence $2$ or just weakly periodic with index of nilpotence $2$, respectively. In what follows, we shall show that such a field is either finite or countably infinite, respectively.

\section{Main Results}

We divide our basic results into two subsections as follows:

\subsection{The general case}

We begin our work with the following plain but useful technicality.

\begin{lemma}\label{SRST}
Let $n\geq 1$ be an integer. Then, the inclusion $ST_n\subseteq SR_n$ holds.
\end{lemma}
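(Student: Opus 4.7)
The plan is to show that for any potent companion matrix $C \in M_n(\F)$, its trace decomposes as a sum of at most $n$ roots of unity (in some extension of $\F$), which is exactly the condition for membership in $SR_n$.

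First I would exploit the defining property of a companion matrix, namely that its characteristic polynomial $\chi_C$ coincides with its minimal polynomial $\mu_C$, and both have degree $n$. Since $C$ is potent there is an integer $k\geq 1$ with $C^{k+1}=C$, so $C$ is annihilated by $X^{k+1}-X=X(X^k-1)$. Hence $\mu_C$, and therefore $\chi_C$, divides $X(X^k-1)$ in $\F[X]$.

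Next I would pass to a splitting field (or to the algebraic closure $\overline{\F}$) in order to read off the eigenvalues of $C$ with multiplicities. Because $\chi_C \mid X(X^k-1)$, each eigenvalue is either $0$ or a $k$-th root of unity, and since $k\geq 1$ we can also view each such nonzero eigenvalue as a root of $X^{2k}-1$ with $2k\geq 2$, so it qualifies as a ``root of unity'' in the sense defined in the introduction. Let $\lambda_1,\dots,\lambda_n$ be the eigenvalues of $C$ listed with multiplicity in $\overline{\F}$; then
\[
\mathrm{tr}(C)=\lambda_1+\cdots+\lambda_n,
\]
and by discarding the zero summands we express $\mathrm{tr}(C)$ as a sum of at most $n$ roots of unity (not necessarily distinct). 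This says precisely that $\mathrm{tr}(C)\in SR_n$, yielding the desired inclusion $ST_n\subseteq SR_n$.

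There is no real obstacle here: the one point to be careful about is the paper's slightly non-standard convention that a ``root of unity'' must be a root of $X^i-1$ with $i\geq 2$. This forces me to observe that the degenerate case $k=1$ (i.e., $C$ idempotent, with nonzero eigenvalues equal to $1$) is still covered, since $1$ is a root of $X^2-1$; equivalently, every $k$-th root of unity is also a $2k$-th root of unity, so the convention $i\geq 2$ is harmless.
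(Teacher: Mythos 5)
Your proposal is correct and follows essentially the same route as the paper: both arguments observe that every eigenvalue $\lambda$ of a potent companion matrix satisfies $\lambda^{k+1}=\lambda$, hence is $0$ or a $k$-th root of unity, and then read off the trace as a sum of at most $n$ such roots. Your additional remarks (that $\chi_C=\mu_C$ divides $X(X^k-1)$, and that the convention requiring $i\geq 2$ is handled by viewing $k$-th roots of unity as $2k$-th roots) are careful refinements of the same argument rather than a different approach.
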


\begin{proof}
Let $t\in ST_n$ be the trace of a potent companion matrix $C$ such that $C^{m+1}=C$ for some non-negative integer $m\geq 1$. Let $\lambda$ be an eigenvalue of $C$. Then, $\lambda^{m+1}=\lambda$. It follows that either $\lambda=0$ or $\lambda^{m}=1$. Suppose $k$ is the number of distinct non-zero distinct eigenvalues of $C$, say $\lambda_1,\lambda_2,\ldots,\lambda_k$. Thus, it follows that there exist non-negative integers $\alpha_1, \alpha_2,\ldots, \alpha_k, \alpha_{k+1}$ with sum $n$, and non-zero integers $\alpha_1,\alpha_2,\ldots,\alpha_k$ such that
$$t=\alpha_1\lambda_1+\alpha_2\lambda_2+\ldots+\alpha_k\lambda_k+\alpha_{k+1}\cdot 0.$$
Therefore,
$$t=\alpha_1\lambda_1+\alpha_2\lambda_2+\ldots+\alpha_k\lambda_k,$$ such that the inequality $\alpha_1+\alpha_2+\ldots+\alpha_k\leq n$ is fulfilled. Hence, $t$ is a sum of at most $n$ $m$-th roots of unity. Consequently, $t\in SR_n$, as required.
\end{proof}

We continue with a few pivotal for our work statements.

\begin{prop}\label{wp2compmat}
Let $n\geq 1$ be a non-negative integer and $C \in M_n(\F)$ a companion matrix over a field $\F$. Then the following two claims are true:
\begin{enumerate}
    \item If $C$ is weakly periodic with nilpotence index $2$, then $\mbox{trace}~C\in SR_n$.
    \item If $\mbox{trace}~C \in ST_n $, then $C$ is weakly periodic with nilpotence index $2$.
\end{enumerate}
\end{prop}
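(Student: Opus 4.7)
The plan is to prove (1) by a trace-and-eigenvalue argument and (2) by an explicit construction that exploits the rigid shape of a companion matrix.

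For (1), I would start from any decomposition $C = P + N$ furnished by the weakly periodic hypothesis, where $P$ is potent, say $P^{k+1} = P$ for some $k \geq 1$, and $N^{2} = 0$. Because $N$ is nilpotent, $\operatorname{trace}(N) = 0$, and hence $\operatorname{trace}(C) = \operatorname{trace}(P)$. Each eigenvalue $\lambda$ of $P$ (taken in an algebraic closure of $\F$, counted with multiplicity) satisfies $\lambda^{k+1} = \lambda$, so either $\lambda = 0$ or $\lambda^{k} = 1$. Since $P$ has exactly $n$ eigenvalues with multiplicity, at most $n$ of them are nonzero roots of unity and the zero ones contribute nothing to the trace; summing gives $\operatorname{trace}(C) = \operatorname{trace}(P) \in SR_n$. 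Note that $P$ itself need not be a companion matrix, which is precisely why the conclusion sits in $SR_n$ rather than $ST_n$.

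For (2), I would exploit the standard companion shape. If $C$ has last column $(-c_0, -c_1, \ldots, -c_{n-1})^{T}$, then $\operatorname{trace}(C) = -c_{n-1}$. The assumption $\operatorname{trace}(C) \in ST_n$ supplies a potent companion matrix $D$ whose trace is also $-c_{n-1}$, so $D$ has last column $(-d_0, -d_1, \ldots, -d_{n-2}, -c_{n-1})^{T}$ for some coefficients $d_0, \ldots, d_{n-2}$. Setting $N := C - D$, the subdiagonal $1$'s cancel and $N$ is supported entirely in the last column, with zero bottom-right entry since the traces match. A direct index computation now gives $(N^{2})_{ij} = N_{in}\, N_{nj}$, which vanishes because $N_{nj} = 0$ for every $j < n$ and $N_{nn} = 0$. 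Hence $C = D + N$ is the sought weakly periodic decomposition with nilpotence index $2$.

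The main obstacle is conceptual rather than technical: in (1) the potent summand $P$ cannot in general be forced into companion form, so the conclusion is unavoidably about $SR_n$ and not $ST_n$, which is exactly why the two parts are not converses of one another. Lemma \ref{SRST} is the natural bridge between them, guaranteeing that the partial converse provided by (2) is at least compatible with (1). Once these observations are in place, both implications reduce to short, routine verifications.
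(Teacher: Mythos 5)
Your proof is correct, and part (1) takes a genuinely more direct route than the paper's. The paper handles (1) by observing that the potent summand $M$, if not itself similar to a companion matrix, is similar to a direct sum of two or more potent companion blocks (its rational canonical form), then applies Lemma \ref{SRST} to each block and sums the traces, with an induction on the number of blocks; you instead apply the eigenvalue argument underlying Lemma \ref{SRST} directly to the potent summand $P$: every eigenvalue of $P$ in the algebraic closure satisfies $\lambda^{k+1}=\lambda$, hence is $0$ or a $k$-th root of unity, and the trace is the sum of at most $n$ such roots counted with multiplicity. This bypasses the case analysis entirely and is cleaner (the paper's own write-up of Case 1/Case 2 is also framed as a contradiction that it never actually uses). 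For part (2) you follow the same one-line idea as the paper --- take a potent companion matrix $C'$ with the same trace and check $(C-C')^2=0$ --- but you supply the verification the paper omits, namely that the difference of two companion matrices is supported in the last column with vanishing $(n,n)$ entry, so it squares to zero. Both of your arguments are sound; the only point worth making explicit in (1) is that the trace of $P$ over $\F$ equals the sum of its eigenvalues in an algebraic closure counted with algebraic multiplicity, which is what lets the conclusion land in $SR_n$.
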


\begin{proof}

(1) Assume $C=M+N$ with $M$ potent and $N$ a square-zero nilpotent such that $\mbox{trace}C \notin SR_n$. So, it is obvious that $\mbox{trace}C \notin ST_n$. Since $\mbox{trace}~N=0$, it follows that $\mbox{trace}~C=\mbox{trace}~M$. Thus, one deduces that $M$ is not similar to a companion matrix. We shall now consider the next two cases:

\noindent{\bf Case 1}: $M$ is similar to a direct sum of two companion matrices $C_1$ of size $l$ and $C_2$ of size $n-l$, so that there exists an invertible matrix of size $n$ over $\F$ with $M=U(C_1\oplus C_2)U^{-1}$. Hence, $\mbox{trace}~M=\mbox{trace}~C=\mbox{trace}~C_1+\mbox{trace}~C_2$. Since the direct sum $C_1\oplus C_2$ is obviously potent, one sees that $\mbox{trace}~C_1\in ST_l$ and $\mbox{trace}~C_2\in ST_{n-l}$. By virtue of Lemma \ref{SRST}, one gets that $\mbox{trace}~C_1\in SR_l$ and $\mbox{trace}~C_2\in SR_{n-l}$. Therefore,       $\mbox{trace}~C_1+\mbox{trace}~C_2\in SR_n$, and hence $\mbox{trace}~M\in SR_n$. But $\mbox{trace}~C=\mbox{trace}~M$, and so we obtain that $\mbox{trace}~C\in SR_n$.

\noindent{\bf Case 2}: $M$ is similar to a direct sum of $u$ companion matrices, where $3\leq u\leq n$. It can easily be seen that with induction on $u$ we will have $\mbox{trace}~C\in SR_n$.

(2) Since $\mbox{trace}~C\in ST_n$, there is a potent companion matrix $C'$ such that $\mbox{trace}~C'=\mbox{trace}~C$, whence $(C-C')^2=0$. In conclusion, $C$ is weakly periodic with nilpotence index $2$, as claimed.
\end{proof}

\begin{prop}\label{FinSRn}
Let $n\geq 2$ be an integer. If all $n\times n$ companion matrices are weakly periodic with nilpotence index $2$, then $\F \subseteq SR_n$.
\end{prop}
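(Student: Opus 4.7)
The plan is to reduce the statement to Proposition \ref{wp2compmat}(1) by exhibiting, for each $a \in \F$, a concrete $n \times n$ companion matrix whose trace equals $a$.

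First, I would recall that for any monic polynomial $p(X) = X^n - a_{n-1} X^{n-1} - a_{n-2} X^{n-2} - \cdots - a_0 \in \F[X]$, its companion matrix (in the usual convention used throughout this paper) has trace $a_{n-1}$, since all diagonal entries are zero except possibly the last, which carries the coefficient of $X^{n-1}$. In particular, given an arbitrary element $a \in \F$, the polynomial $X^n - a X^{n-1}$ yields a bona fide companion matrix $C_a \in M_n(\F)$ with $\mathrm{trace}\, C_a = a$.

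Now I would feed this into the hypothesis. Fix $a \in \F$ arbitrary. By assumption, the companion matrix $C_a$ constructed above is weakly periodic with nilpotence index $2$. Applying part (1) of Proposition \ref{wp2compmat} to $C_a$ gives $\mathrm{trace}\, C_a \in SR_n$, that is, $a \in SR_n$. Since $a$ was arbitrary, this yields $\F \subseteq SR_n$, as required.

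I do not anticipate a genuine obstacle here: the content is almost entirely packed into Proposition \ref{wp2compmat}, and the only thing to verify is the trivial observation that every element of $\F$ actually occurs as the trace of some $n \times n$ companion matrix (which is automatic because the coefficient of $X^{n-1}$ in the associated monic polynomial can be chosen freely). The one small point to be careful about is the sign convention in the definition of companion matrix; this is cosmetic, since if $\mathrm{trace}\, C_a = -a$ under the opposite convention, one simply works with $X^n + aX^{n-1}$ instead, and the argument is unchanged.
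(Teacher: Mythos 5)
Your proposal is correct and follows exactly the route the paper intends: the paper's own proof is just the one-line remark that the claim ``follows immediately'' from Proposition~\ref{wp2compmat}, and you have merely made explicit the (correct and necessary) observation that every element of $\F$ is realized as the trace of some $n\times n$ companion matrix. No further comment is needed.
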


\begin{proof}
This follows immediately with the aid of Proposition \ref{wp2compmat}.
\end{proof}

\begin{lemma}\label{Wm}
Let $n\geq 2$ be an integer. Then the following containment is valid:
$$\cup_{k \leq n, d(m) \geq k} W_m \subseteq\cup_{m \in \mathbb{N}, m \geq 2} W_m.$$
\end{lemma}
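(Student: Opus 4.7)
The plan is to establish the inclusion by a direct comparison of the two index sets indexing the unions, rather than by any manipulation involving the sets $W_m$ themselves. The key point I will use is that each summand $W_m$ appearing in the left-hand union is also, automatically, one of the summands in the right-hand union; once this is observed, the inclusion of the two unions follows set-theoretically.

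More precisely, I will first recall from the definition that $W_m = \bigcup_{f\in L_{m,n}} \mathrm{Spec}(f(P_{\tau_m}))$ is only meaningful for $m\geq 2$, since $\tau_m\in S_m$ and the associated permutation matrix $P_{\tau_m}$ were introduced for $m\geq 2$. Consequently, any $m$ that indexes a summand on the left-hand side is, implicitly, a natural number with $m\geq 2$. Next I will observe that the condition $k\leq n$ together with $d(m)\geq k$ carries no information beyond $m\in \mathbb{N}$ (taking $k=1$ makes $d(m)\geq k$ hold for every $m\geq 1$), so the effective index set on the left is a subset of $\{m\in\mathbb{N}:m\geq 2\}$. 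This is exactly the index set on the right, so each $W_m$ on the left appears verbatim on the right, and therefore
\[
\bigcup_{k\leq n,\; d(m)\geq k} W_m \;\subseteq\; \bigcup_{m\in\mathbb{N},\; m\geq 2} W_m,
\]
as required.

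I do not expect any genuine obstacle: the content of the lemma is purely a bookkeeping remark about the two indexing conventions, and will presumably be invoked later to pass between a ``restricted'' family of $W_m$'s (those whose size $m$ is divisible enough to host $k$ distinct eigenvalues' worth of data) and the ``full'' family of all $W_m$. The only care I will take is to make sure that no pathological choice of $k$ (for instance $k=0$, which the excerpt does not explicitly forbid) forces a summand $W_m$ with $m<2$ into the left-hand union; but since $W_m$ is undefined for $m<2$, such indices cannot occur, and the argument goes through unchanged.
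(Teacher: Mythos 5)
Your argument is correct and is essentially the paper's own: the authors likewise dispose of the lemma in one line by noting that the left-hand union is indexed by a more restrictive condition (``if $m$ has at least $k$ divisors, then $m$ has at least one divisor''), so every summand $W_m$ on the left already occurs on the right. Your version is if anything slightly more careful, since you also flag that $W_m$ is only defined for $m\geq 2$, which rules out any stray indices.
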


\begin{proof}
This follows directly from the easy fact that, for any non-negative integer $m\geq 2$, if $m$ has at least $k$ divisors, then $m$ has at least one divisor.
\end{proof}

\begin{lemma}\label{SRnWm}
Let $n\geq 1$ be a non-negative integer. Then the following relation holds:
$$SR_n \subseteq\cup_{m \in \mathbb{N}, m \geq 2} W_m.$$
\end{lemma}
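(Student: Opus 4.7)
The plan is to take an arbitrary $t \in SR_n$, written as
\[
t \;=\; \zeta_1 + \zeta_2 + \cdots + \zeta_k
\]
with $k \leq n$ and each $\zeta_j$ a root of $X^{i_j} - 1$ for some $i_j \geq 2$, and to construct a concrete integer $m \geq 2$ together with a polynomial $f \in L_{m,n}$ so that $t \in \mathrm{Spec}(f(P_{\tau_m}))$. The key structural fact driving the argument is that the eigenvalues of $P_{\tau_m}$ over the algebraic closure $\overline{\F}$ are exactly the $m$-th roots of unity, so if $\omega$ is such an eigenvalue of $P_{\tau_m}$, then $f(\omega)$ is automatically an eigenvalue of $f(P_{\tau_m})$.

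First, I would choose $m$ to be a common exponent for the $\zeta_j$, say a multiple of $\mathrm{lcm}(i_1, \ldots, i_k)$. In the case $\mathrm{char}(\F) = p > 0$, a small adjustment is needed: the $i_j$ should be replaced (if necessary) by integers coprime to $p$, which is always possible because every root of unity in a field of characteristic $p$ has order coprime to $p$. With $p \nmid m$, a primitive $m$-th root of unity $\omega \in \overline{\F}$ exists and each $\zeta_j$ can be expressed as $\omega^{a_j}$ for some $a_j \in \{0, 1, \ldots, m-1\}$. I then define
\[
f(X) \;=\; X^{a_1} + X^{a_2} + \cdots + X^{a_k},
\]
whose coefficient at $X^s$ equals the cardinality of $\{j : a_j = s\}$. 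Then $\deg f \leq m-1 \leq m$, the coefficients of $f$ are non-negative integer multiples of $1_\F$, and the sum of these coefficients equals $k \leq n$; hence $f \in L_{m,n}$.

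To conclude, since $\omega \in \mathrm{Spec}(P_{\tau_m})$ one has
\[
f(\omega) \;=\; \sum_{j=1}^{k} \omega^{a_j} \;=\; \sum_{j=1}^{k} \zeta_j \;=\; t,
\]
so $t$ is an eigenvalue of $f(P_{\tau_m})$ over $\overline{\F}$; and since $t \in \F$ while the characteristic polynomial of $f(P_{\tau_m})$ lies in $\F[X]$, the element $t$ is a root of this polynomial inside $\F$, i.e.\ an honest eigenvalue over $\F$. This gives $t \in W_m$ and finishes the inclusion. The only genuinely subtle point is the positive-characteristic choice of $m$ coprime to $p$: without this adjustment, $P_{\tau_m}$ is no longer diagonalisable over $\overline{\F}$ and one has to distinguish between eigenvalues and generalised eigenvalues; everything else is a direct verification.
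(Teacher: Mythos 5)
Your proof is correct and follows essentially the same route as the paper's: choose a common exponent $m$, write the roots of unity as powers of a primitive $m$-th root $\epsilon$, encode the multiplicities as the coefficients of a polynomial $f \in L_{m,n}$, and observe that $t = f(\epsilon)$ is an eigenvalue of $f(P_{\tau_m})$, hence $t \in W_m$. Your explicit adjustment of $m$ to be coprime to $p$ in positive characteristic is a welcome refinement: the paper's proof silently assumes that a primitive $m$-th root of unity generating all $m$-th roots exists, which requires exactly the hypothesis $p \nmid m$ that you supply.
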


\begin{proof} Letting $t\in SR_n$, then there exist non-zero integers $1\leq k\leq n$ and $\alpha_1, \alpha_2,  \ldots \alpha_k$ with sum not exceeding $n$ as well as there are roots of unity $\lambda_1,\lambda_2, ...\lambda_k$ and non-negative integers $m_1, m_2,\ldots, m_k$ such that $\lambda_1^{m_1}=\lambda_2^{m_2}=\ldots=\lambda_k^{m_k}=1$ and such that

$$t=\alpha_1\lambda_1+\alpha_2\lambda_1+\ldots \alpha_k\lambda_k.$$
If we take $m$ to be the common multiple of $m_1,m_2,\ldots,m_k$, then $\lambda_i^m=1$ for every $i \in \{1,2,\ldots,k\}$. Consequently, $k\leq m$ and $$\{\lambda_1,\lambda_2,\ldots,\lambda_k\}\subseteq \{\epsilon^{a_1}+\epsilon^{a_2},\ldots \epsilon^{a_k}\},$$ where $\{a_1,a_2,\ldots,a_k\}\subseteq\{0,1,\ldots, m-1\}$ and $\epsilon$ is so that it generates all $m$-th roots of unity. Without loss of generality, we can assume that $a_1<a_2<\ldots<a_k$. Therefore, $$t=\alpha_1\epsilon^{a_1}+\alpha_2\epsilon^{a_2}+\ldots+\alpha_k\epsilon^{a_k}.$$
Suppose now that $$f=\alpha_1X^{a_1}+\alpha_2X^{a_2}+\ldots+\alpha_kX^{a_k}.$$
Since $\alpha_1, \alpha_2,  \ldots , \alpha_k$ are non-negative integers with sum not exceeding $n$, it will follow that $f\in L_{m,n}$. We also will have the equality $$t=f(\epsilon).$$
Since $\epsilon$ is a primitive $m$-th root of unity, one extracts that $\epsilon$ is an eigenvalue for $P_{\tau_m}$, and hence $f(\epsilon)$ is an eigenvalue of $f(P_{\tau_m})$.

Furthermore, since $m$ is a common multiple $k$ non-zero integers greater than $1, m_1, m_2, \ldots, m_k$, it must be that $m\geq 2$ is a non-negative integer. Now, as $m$ is a non-negative integer, $m$ is a common multiple of number $k$ of its divisors if, and only if, $m$ has at least $k$ divisors and thus, in conjunction with Lemma \ref{Wm}, one derives that
$$SR_n\subseteq\cup_{k \leq n, d(m) \geq k} W_m \subseteq\cup_{m \in \mathbb{N}, m \geq 2} W_m,$$
as asserted.
\end{proof}

\begin{lemma}\label{eq4omega}
Let $m\geq 2$ and $n\geq 1$ be integers. If $\omega \in W_m$, then there exist two integer multiples of unity $u=u(\omega)$ and $\alpha = \alpha(\omega)$ such that $(\omega -\alpha )^m=u$.
\end{lemma}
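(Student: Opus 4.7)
The plan is to prove the stronger \emph{matrix} identity $(f(P_{\tau_m}) - \alpha\,I_m)^m = u\,I_m$ in $M_m(\F)$ for a suitable choice of $\alpha, u \in \mathbb{Z}\cdot 1_\F$. Once this is in hand, applying it to any eigenvector of $f(P_{\tau_m})$ for the eigenvalue $\omega$ immediately yields the desired scalar identity $(\omega-\alpha)^m = u$.

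To set things up, I would invoke the definition of $W_m$ to pick $f \in L_{m,n}$ with $\omega$ an eigenvalue of $M := f(P_{\tau_m})$, and write $f(X) = \sum_{i=0}^m \alpha_i X^i$ with non-negative integer coefficients of sum at most $n$. Since $P_{\tau_m}^m = I_m$, I may reduce $f$ modulo $X^m - 1$ and assume $\deg f < m$. Then I set $\alpha := \alpha_0\cdot 1_\F$, the constant term; with this choice,
\[
M - \alpha\,I_m \;=\; P_{\tau_m}\cdot h(P_{\tau_m}), \qquad h(X) := \sum_{i=1}^{m-1}\alpha_i\,X^{i-1} \in \mathbb{Z}[X].
\]
Because $P_{\tau_m}$ commutes with $h(P_{\tau_m})$ and $P_{\tau_m}^m = I_m$, raising both sides to the $m$-th power collapses the prefactor, leaving $(M - \alpha\,I_m)^m = h(P_{\tau_m})^m$.

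The next step is to expand $h(P_{\tau_m})^m$ by the multinomial theorem and use $P_{\tau_m}^m = I_m$ to reduce each exponent modulo $m$, producing a unique representation
\[
h(P_{\tau_m})^m \;=\; \sum_{r=0}^{m-1} C_r\,P_{\tau_m}^r
\]
with explicit integer coefficients $C_r$ formed from sums of multinomial contributions grouped by the residue class of the total exponent modulo $m$. I would then set $u := C_0\cdot 1_\F$; the lemma would follow once one knows $C_r\cdot 1_\F = 0$ for every $r = 1, 2, \ldots, m-1$, because then $(M - \alpha\,I_m)^m = u\,I_m$ and the eigenvalue specialisation gives $(\omega-\alpha)^m = u$.

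The technical heart of the proof — and what I expect to be the main obstacle — is exactly this vanishing of $C_r\cdot 1_\F$ for $r \neq 0$. The necessary collapse among the multinomial contributions is not automatic; it depends sensitively on the characteristic of $\F$ and on the factorisation of $m$. When $\F$ has characteristic $p$ and $m$ is a power of $p$, iterated Frobenius combined with the divisibilities $p \mid \binom{p^k}{j_1,\ldots,j_{m-1}}$ for every non-monomial partition eliminates all but the diagonal contributions, and one obtains $u = \sum_i \alpha_i^m\cdot 1_\F$ cleanly. In more general situations the cancellations are subtler, and this is where the interplay between $\F$, $m$, and the coefficient structure of $f$ has to be exploited — a structural tension that ultimately forces $\F$ into positive characteristic in the later theorems of the paper.
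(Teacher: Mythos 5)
Your reduction to a matrix identity is clean as far as it goes: writing $M-\alpha I_m=P_{\tau_m}\,h(P_{\tau_m})$ with $\alpha=f(0)$ and using $P_{\tau_m}^m=I_m$ correctly gives $(M-\alpha I_m)^m=h(P_{\tau_m})^m$, and the eigenvector specialisation at the end is fine. The genuine gap is exactly the step you flag and then leave open: the vanishing of $C_r\cdot 1_\F$ for $r\neq 0$ is not a subtle cancellation waiting to be teased out --- it is simply false for general $m$ and general $\F$. Take $m=3$ and $f=X+X^2$, so $h(X)=1+X$ and
\[
h(P_{\tau_3})^3=(I_3+P_{\tau_3})^3=2I_3+3P_{\tau_3}+3P_{\tau_3}^2,
\]
which is scalar only if $3=0$ in $\F$. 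Equivalently, over $\Q$ the matrix $f(P_{\tau_3})$ has characteristic polynomial $X^3-3X-2=(X-2)(X+1)^2$, hence eigenvalues $2$ and $-1$, and no single pair $(\alpha,u)$ of rational integers satisfies $(2-\alpha)^3=u=(-1-\alpha)^3$; so the stronger uniform identity you aim at cannot hold. Since the lemma is invoked in Theorem~\ref{main} for every $m\geq 2$ over a field whose characteristic is precisely what one is trying to determine, retreating to the case $\operatorname{char}\F=p$ and $m$ a power of $p$ (the only situation where your Frobenius/multinomial argument closes) does not suffice and risks circularity. As it stands the proposal is a programme rather than a proof: the ``main obstacle'' you identify is the entire content of the lemma.

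For what the comparison is worth, the paper argues instead on the characteristic polynomial, expanding $\det(XI_m-f(P_{\tau_m}))$ by the Leibniz formula and retaining only the identity permutation and the cyclic powers $\tau_m^{a_j}$. The terms it discards are exactly the ``mixed'' permutations that act as different powers of $\tau_m$ on different indices, and these are the same cross terms that your coefficients $C_r$ with $r\neq 0$ record; in the example above they account for the $-3X$ in $X^3-3X-2$, whereas the paper's formula would predict $X^3-2$. So the obstruction you ran into is genuine and not an artifact of your particular route; it is the point at which any proof of this lemma must do real work, and your proposal does not yet do it.
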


\begin{proof}
Given $\omega\in W_m$. Then, for every $f\in L_{m,n}$, we have $\omega\in Spec(f(P_{\tau_m})$. It follows now that $\omega$ is a root of the polynomial $det(XI_m-f(P_{\tau_m})$. But as $f\in L_{m,n}$, there exist $1\leq k\leq m$ and non-zero integers $\alpha_1, \alpha_2, \ldots, \alpha_k$ and non-negative pairwise distinct integers $a_1, a_2,\ldots, a_k$ with values at most $n-1$ such that $$f=\alpha_1X^{a_1}+\alpha_2X^{a_2}+\ldots+\alpha_kX^{a_k}.$$
Therefore, $$f(P_{\tau_m})=\alpha_1(P_{\tau_m})^{a_1}+\alpha_2(P_{\tau_m})^{a_2}+\ldots+\alpha_k(P_{\tau_m})^{a_k}.$$
Since $\tau_m$ is a cycle in $S_m$, and $\tau_m=(1 ~ 2 \ldots ~ m)$, it follows that $\tau_m$ is a product of $m-1$ transpositions, and hence $\tau_m$ and $m$ obviously have different parities.

Furthermore, assuming $m$ is even, it follows then that $\tau_m$ is odd, and thus $\tau_m^{a_i}$ and $a_i$ have the same parity for every $i\in \{1,2,\ldots,k\}$. Now, let $(b_{ij})_{1\leq i,j\leq m}=XI_m-f(P_{\tau_m})$ and $\alpha_i=f(0)$. Therefore, $$det(f(P_{\tau_m})-XI_m)=(\alpha_i-XI_m)^m+\sum_{\sigma\neq e,\sigma even}b_{1\sigma(1)}\cdots b_{m\sigma(m)}-\sum_{\sigma odd}b_{1\sigma(1)}\cdots b_{m\sigma(m)}=$$
$$=(\alpha_i-X)^m+(-1)^{a_1+1}\alpha_1^m+(-1)^{a_2+1}\alpha_2^m+\ldots+(-1)^{a_k+1}\alpha_k^m-(-1)^{a_i+1}\alpha_i^m.$$
Consequently, for even $m$, we obtain: $$(\alpha_i-\omega)^m=(-1)^{a_1}\alpha_1^m+(-1)^{a_2}\alpha_2^m+\ldots+(-1)^{a_k}\alpha_k^m-(-1)^{a_i}\alpha_i^m.$$

Assuming that $m$ is now odd, we then infer that $\tau_m$ is even, and so $\tau_m^{a_i}$ is even for each $i\in \{1,2,\ldots,k\}$. Now, let $(b_{ij})_{1\leq i,j\leq m}=f(P_{\tau_m})-XI_m$ and $\alpha_i=f(0)$
Therefore, $$det(f(P_{\tau_m})-XI_m)=(\alpha_i-X)^m+\sum_{\sigma\neq e,\sigma even}b_{1\sigma(1)}\cdots b_{m\sigma(m)}-\sum_{\sigma odd}b_{1\sigma(1)}\cdots b_{m\sigma(m)}=$$
$$=(\alpha_i-X)^m+\alpha_1^m+\alpha_2^m+\ldots+\alpha_k^m-0-\alpha_i^m.$$
Consequently, for odd $m$, we conclude: $$(\omega-\alpha_i)^m=\alpha_1^m+\alpha_2^m+\ldots+\alpha_k^m-\alpha_i^m.$$

Finally, there exist multiple integers of unity, say $\alpha=\alpha_i$ and $u$, such that $(\omega-\alpha)^m=u$, as required.
\end{proof}

The next claim is closely related to assertions from \cite{LAMA}.

\begin{lemma}\label{ADiagIsNonderog}
Every diagonalizable matrix over the finite Galois field $GF(p^l)$ for some $l\geq 1$ with no multiple eigenvalues is a non-derogatory potent matrix.
\end{lemma}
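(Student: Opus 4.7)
The plan is to establish the two claims --- non-derogatory and potent --- by short, independent arguments that exploit the distinct-eigenvalue hypothesis and the finiteness of the base field, respectively.

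For non-derogacy, I would observe that since $A$ is diagonalizable over $GF(p^l)$ with pairwise distinct eigenvalues $\lambda_1,\ldots,\lambda_n$, its characteristic polynomial factors as $\prod_{i=1}^n (X-\lambda_i)$, a product of pairwise coprime linear factors. The minimal polynomial of $A$ must vanish on each eigenvalue, hence must be divisible by each $(X-\lambda_i)$; pairwise coprimality then forces the minimal polynomial to be divisible by the characteristic polynomial, so the two polynomials coincide. This is precisely the characterization of a non-derogatory matrix; equivalently, $A$ is similar to the companion matrix of its characteristic polynomial.

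For potency, the key identity is $x^{p^l}=x$ for every $x\in GF(p^l)$, which follows from Lagrange's theorem applied to the multiplicative group $GF(p^l)^\ast$ (with the value $0$ checked separately). Applied to each eigenvalue this gives $\lambda_i^{p^l}=\lambda_i$ for all $i$. Writing $A=PDP^{-1}$ with $D=\mathrm{diag}(\lambda_1,\ldots,\lambda_n)$, one immediately obtains $A^{p^l}=PD^{p^l}P^{-1}=PDP^{-1}=A$, so taking $k=p^l-1\geq 1$ yields $A^{k+1}=A$, showing that $A$ is potent.

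I do not anticipate any real obstacle: both assertions reduce to one-line verifications. The only minor point to keep in mind is the standard equivalence between ``non-derogatory'' and ``minimal polynomial equals characteristic polynomial'' (equivalently, similarity to a companion matrix of the same size), after which the proof is entirely routine.
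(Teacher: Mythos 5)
Your proposal is correct and follows essentially the same route as the paper: potency via the identity $x^{p^l}=x$ applied to the diagonal form, and non-derogacy from the distinct-eigenvalue hypothesis (the paper phrases this via algebraic and geometric multiplicities all equalling $1$, while you phrase it via the minimal polynomial equalling the characteristic polynomial --- equivalent characterizations).
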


\begin{proof}
Since every element $x\in GF(p^l)$ satisfies the equation $x^{p^l}=x$, it follows that each diagonal over $GF(p^l)$ is potent. Hence, any diagonalizable matrix over $GF(p^l)$ is necessarily potent.

Since the matrix has no multiple eigenvalues, the algebraic multiplicity of every eigenvalue is exactly $1$. And since the matrix is diagonalizable, the geometric multiplicity of any eigenvalue equals to the algebraic multiplicity, so will be the $1$ too. Therefore, the matrix is non-derogatory as well.

In conclusion, the matrix is a non-derogatory potent matrix, as expected.
\end{proof}

The following technical claim from field theory is our key to establish the chief result stated below.

\begin{prop}\label{fields} Each infinite field, which is an algebraic extension of its minimal simple (finite) subfield, is an infinite countable union of its finite subfields, and thus it is countable. In addition, these finite subfields can be taken of the sort GF$(p^{l_i})$ such that GF$(p^{l_i})$ is contained in GF$(p^{l_{i+1}})$ and $l_i$ divides $l_{i+1}$ for each $i \geq 1$.
\end{prop}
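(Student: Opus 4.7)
The plan is to use the fact that an algebraic extension of a finite prime field $GF(p)$ is generated over $GF(p)$ by algebraic elements, each of which already lives in a finite subfield of the form $GF(p^l)$, and then to package all such subfields into an ascending chain with divisible exponents.

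First, since $\F$ is an algebraic extension of its prime subfield, the characteristic of $\F$ is a prime $p$ and the minimal simple subfield is $GF(p)$. For any $x\in\F$, the subfield $GF(p)(x)$ is a finite algebraic extension of $GF(p)$, hence a finite field of the form $GF(p^{d(x)})$ where $d(x)=[GF(p)(x):GF(p)]$. Let
\[
S=\{\,l\ge 1 : GF(p^l)\subseteq \F\,\}.
\]
Since $x\in GF(p^{d(x)})\subseteq \F$ for every $x\in \F$, we obtain $\F=\bigcup_{l\in S}GF(p^l)$. As $S\subseteq\mathbb{N}$, the set $S$ is countable. If $S$ were finite, then $\F$ would be a finite union of finite sets and hence finite, contrary to assumption; therefore $S$ is countably infinite, and $\F$ is a countable union of finite fields, hence countable.

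Next I would promote this decomposition into the required chain. The key structural property of $S$ is closure under least common multiples: if $l,l'\in S$, then inside $\F$ the compositum $GF(p^l)\cdot GF(p^{l'})$ equals $GF(p^{\mathrm{lcm}(l,l')})$, so $\mathrm{lcm}(l,l')\in S$. Enumerating $S=\{s_1,s_2,\ldots\}$ arbitrarily, I then define inductively
\[
l_1=s_1, \qquad l_{i+1}=\mathrm{lcm}(l_i,s_{i+1}).
\]
By closure of $S$ under lcm, each $l_i\in S$; by construction $l_i\mid l_{i+1}$, which is equivalent to $GF(p^{l_i})\subseteq GF(p^{l_{i+1}})$. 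Moreover $s_j\mid l_j$ for every $j$, so $GF(p^{s_j})\subseteq GF(p^{l_j})$, which gives
\[
\F=\bigcup_{l\in S}GF(p^l)=\bigcup_{i\ge 1}GF(p^{l_i}).
\]
Finally, the chain is strictly increasing in infinitely many stages (otherwise it would stabilize at some $GF(p^{l_N})$, and $\F$ would be finite), so we may pass to a subsequence, still denoted $\{l_i\}$, with $l_i\mid l_{i+1}$ and $l_i<l_{i+1}$ for all $i$.

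The only nontrivial bookkeeping is this last point: combining closure under lcm with the enumeration of $S$ to simultaneously achieve ascending inclusion, divisibility of exponents, and coverage of all of $\F$. Everything else is a routine application of the classification of finite fields together with the observation that a subset of $\mathbb{N}$ is automatically countable.
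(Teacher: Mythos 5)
Your proof is correct and follows essentially the same route as the paper: both identify the finite subfields of $\F$ with a subset of $\mathbb{N}$ via their degrees over $GF(p)$ to obtain countability, and then assemble an ascending chain of finite subfields exhausting $\F$. The only difference is in how the chain is built --- you enumerate the degree set $S$ and take successive least common multiples (using that the compositum of $GF(p^{l})$ and $GF(p^{l'})$ inside $\F$ is $GF(p^{\mathrm{lcm}(l,l')})$), whereas the paper enumerates the elements of $\F$ and sets $\F_n=GF(p)(f_1,\dots,f_n)$; both mechanisms are valid.
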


\begin{proof} Given $\F$ is an infinite field of characteristic $p>0$, which is an algebraic extension of its simple subfield ${\F}_p$ of $p$ elements, and $u$ is an arbitrary element of $\F$. Hence, each ${\F}_p(u)$ which means the extension of $\F_p$ generated by $u$, is a finite extension of ${\F}_p$ and so it is a finite subfield of $\F$. Precisely, all finite subfields of $\F$ are of this type, because for the finite extensions of ${\F}_p$ is valid the classical theorem for the "primitive element". We also know that $\F_p(u)\cong F_p[X]/(g_{u}(X))$, where $X$ is a transcendental element over $\F_p$, and $g_{u}(X)$ is the minimal polynomial of $u$ over $\F_p$. To ensure an uniqueness of $g_{u}(X)$, as usual we will assume that it is normed, that is, its chief coefficient is exactly $1$. Also, the degree $[{\F_p}(u):{\F}_p]$ of the extension ${\F_p}(u)/{\F}_p$ coincides with the degree of the polynomial $g_{u}(X)$.

Furthermore, the extensions ${\F_p}(u)/{\F}_p$ are normal with cyclic Galois group. Moreover, for every positive integer $n$, at most (exactly) one field of the form ${\F}_p(u)$ is an extension of the field ${\F}_p$ of degree $n$. In other words, there is a bijection between the set of all subfields of $\F$ of the kind ${\F}_p(u)$ onto some subset of the set $\mathbb{N}$ consisting of all natural numbers. Concretely, this subset consists of those elements of $\mathbb{N}$ which are equal to the degree $[{\F_p}(u):{\F}_p]$ for some element $u\in \F$. Since any subset of $\mathbb{N}$ is either finite or infinitely countable, our arguments so far allow us to conclude that $\F$ can be presented as a finite or countably infinite union of finite (sub)fields. This union is properly countable uniquely when $\F$ is an infinitely countable field, as pursued.

Next, by what we have already shown, the base field $\F$ is countable, and hence the elements of $\F$ can be linearly ordered as $f_{1}, \ldots, f _{n}, \ldots$~. Set $\F_{n} = \F_p(f_{1}, \ldots, f_{n})$ for every $n\in \mathbb{N}$. It is now easily inspected that $\F$ is equal to the countable union of all its subfields $\F_n$, where $n \geq 1$. Besides, it easily follows that $\F_{n}/\F_p$ is a finite extension, because it is simultaneously an algebraic extension and finitely generated. Likewise, $\F_{n}$ is obviously a subfield of $\F_{n+1}$ for each natural $n\geq 1$. Thus, we arrive at the conclusion that $\F_n$ is a finite field of order $p^{l_n}$, where $l_n=[\F_{n}:\F_p]$. Since $\F_{n}\leq \F_{n+1}$, one deduces that $l_{n+1}$ is divided by $l_n$ and that the equality $l_{n+1}/l_n=[\F _{n+1}:\F _{n}]$ holds for all $n\in \mathbb{N}$, as desired.

As for the second part that such a field is necessarily is now an immediate consequence of the first one.
\end{proof}

The next comments are worthwhile to explain that the conditions in the previous statement cannot be weakened.

\begin{remark} Knowing that $\mathbb{F}_p$ is the simple (finite) field of prime characteristic $p$, routine arguments show that the field $\mathbb{F}_p(X)$ of rational functions of the variable $X$ with coefficients from $\mathbb{F}_p$, which is actually a transcendental extension of $\mathbb{F}_p$, is an example of a countable field of characteristic $p$ which {\it cannot} be constructed as a countable union of finite fields. Moreover, arguing in the same manner, one can see that the field of rational numbers, $\mathbb{Q}$, also cannnot be presented as a countable union of finite fields.
\end{remark}

We now arrive at our first main result in the present paper. Specifically, the following is true:

\begin{thm}\label{main}
Let $n\geq 1$ be an integer and let $\F$ be a field. Then the following two conditions are equivalent:
\begin{enumerate}
  \item Every $n\times n$ companion matrix over $\F$ is the sum of a potent and a square-zero nilpotent over $\F$.
  \item $\F$ is a countable field of positive characteristic, which is an algebraic extension of its minimal simple (finite) subfield, with all subfields of order greater than $n$.
\end{enumerate}
\end{thm}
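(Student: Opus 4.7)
The plan is to prove the two implications separately, using Propositions \ref{wp2compmat} and \ref{fields} as the main hinges and treating the easier direction first.

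For $(2) \Rightarrow (1)$, I would pick an arbitrary companion matrix $C \in M_n(\F)$ with trace $t$. By Proposition \ref{fields}, $\F$ is an ascending union of finite subfields $GF(p^{l_i})$, so in particular $t \in GF(p^l)$ for some $l$, and the hypothesis gives $p^l > n$. Hence I can select $n$ pairwise distinct elements $a_1,\ldots,a_n \in GF(p^l)$ with $\sum_i a_i = t$ (choose $n-1$ of them freely and solve for the last, using $p^l \geq n+1$ to dodge the finitely many forbidden coincidences). The diagonal matrix $D = \mbox{diag}(a_1,\ldots,a_n)$ is a non-derogatory potent matrix by Lemma \ref{ADiagIsNonderog}, and any non-derogatory matrix is similar to its companion matrix; thus there is a potent companion $C'$ with $\mbox{trace}~C' = t$, so $t \in ST_n$ and Proposition \ref{wp2compmat}(2) delivers that $C$ is weakly periodic with nilpotence index $2$.

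For $(1) \Rightarrow (2)$, I would stack Proposition \ref{FinSRn} with Lemmas \ref{SRnWm} and \ref{eq4omega}: the hypothesis gives $\F \subseteq SR_n \subseteq \bigcup_{m\geq 2} W_m$, so by Lemma \ref{eq4omega} every $\omega \in \F$ satisfies $(\omega-\alpha)^m = u$ for some integer multiples of unity $\alpha,u$ and some $m \geq 2$; thus $\omega$ is algebraic over the prime subfield. Characteristic zero is ruled out by feeding $\omega = 1/2 \in \mathbb{Q} \subseteq \F$ into this relation: $(1-2\alpha)^m = 2^m u$ would force an odd left-hand side against an even right-hand side (when $u \neq 0$), while $u=0$ puts $1/2 \in \mathbb{Z}$---both impossible. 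Hence $\mbox{char}\,\F = p > 0$, $\F$ is algebraic over $\mathbb{F}_p$, and Proposition \ref{fields} delivers countability together with the stated ascending-union-of-finite-subfields structure.

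\textbf{Main obstacle.} The most delicate remaining assertion is the subfield-size bound: every finite subfield of $\F$ should have order $> n$. I would argue by contradiction---supposing some $GF(p^l) \subseteq \F$ has $p^l \leq n$, one would like to exhibit a trace $t \in GF(p^l)$ that cannot be realized as $\mbox{trace}~C'$ for any potent companion matrix $C'$, and then invoke Proposition \ref{wp2compmat}(1) (via its contrapositive, applied at the companion matrix of a well-chosen polynomial with trace $t$) to produce a companion matrix that is not weakly periodic. The difficulty is that Proposition \ref{wp2compmat}(1) only forces the trace into $SR_n$ rather than $ST_n$, so quantifying the gap $SR_n \setminus ST_n$ when the pool of available roots of unity in $\F$ is too small---and ruling out potent decompositions built from Jordan blocks rather than diagonal ones---is the technical heart of the argument.
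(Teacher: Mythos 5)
Your proposal is sound in outline, but it leaves unproved precisely the hardest part of the theorem: the claim in $(1)\Rightarrow(2)$ that every finite subfield of $\F$ has order greater than $n$. You correctly identify this as the "main obstacle" and sketch a route through the gap between $SR_n$ and $ST_n$, but you do not close it, and that gap is where essentially all of the work in the paper's proof lives. The paper does \emph{not} go through $ST_n$ at all for this step. Instead it argues by contradiction: if $GF(p^l)\subseteq\F$ with $p^l-1\leq n-1$, it writes $-1=h+h^2+\cdots+h^{d-1}$ where $h$ generates the $d$ solutions of $x^m=1$ in $GF(p^l)$ for a carefully chosen odd proper divisor $d$ of $p^l-1$ and odd $m$ with $\gcd(p^l-1,m)=d$; feeding this representation into the explicit determinant identity of Lemma \ref{eq4omega} yields $(-1)^m=d-1$ in $\F_p$, hence $p\mid d\mid p^l-1$, which is absurd. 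The exceptional case where $p^l-1$ has no odd proper divisor (i.e.\ $p^l-1=2^t$) is then whittled down by elementary number theory to $GF(9)$, which is killed by a separate computation with $12$-th roots of unity. Without some argument of this kind (or a genuine substitute), your proof of $(1)\Rightarrow(2)$ is incomplete.

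Two further remarks. First, in $(2)\Rightarrow(1)$ your construction of $n$ pairwise distinct elements of $GF(p^l)$ with prescribed sum $t$ is justified only by $p^l>n$, which is not enough: for $p=2$, $n=2$, $t=0$ the element $0$ is not a sum of two distinct elements of $GF(4)$, since $x+y=0$ forces $x=y$ in characteristic $2$. The fix is available from your own hypothesis: condition (2) applies to the prime subfield $GF(p)$ as well, so $p>n$, and then $a_i=b+i\cdot 1$ with $nb=t-n(n+1)/2$ gives $n$ distinct elements summing to $t$. With that repair, your $(2)\Rightarrow(1)$ is correct and is actually cleaner than the paper's, which invokes the external decomposition theorem of \cite{LAMA} to produce a diagonalizable-plus-square-zero splitting when only the trace statement is needed. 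Second, your characteristic-zero exclusion via $\omega=1/2$ and the parity of $(1-2\alpha)^m$ versus $2^mu$ is a tidier special case of the paper's general $r\cdot s^{-1}$ divisibility argument, and your derivation of algebraicity over $\F_p$ directly from $(\omega-\alpha)^m=u$ is likewise simpler than the paper's root-of-unity analysis; both are valid.
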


\begin{proof}
$(1)\implies (2)$. Assume that each $n\times n$ companion matrix $C$ over $\F$ is weakly periodic with nilpotence index $2$. Referring to Proposition \ref{SRnWm}, we have $\F \subseteq\cap_{m \in \mathbb{N}, m \geq 2} W_m$. But, for every $\omega \in W_m$, there exist integer multiples of unity, say $\alpha$ and $u$, such that $(\omega - \alpha)^m=u$ by owing to Lemma \ref{eq4omega}.

Assume in a way of contradiction that $\F$ has zero characteristic. It then follows that each non-zero integer multiple of unity $s$ is invertible. Set $\omega = r \cdot s^{-1}$, and let $r$ and $\omega$ be integers multiple of unity. Hence, $r \cdot s^{-1} \in \cap_{m \in \mathbb{N}, m \geq 2} W_m$. Fix an $m \geq 2$. Thus, there exist integer multiples of unity $\alpha$ and $u$ such that $(r \cdot s^{-1} -\alpha)^m=u$. By the classical Newton's binomial formula, we find that

$$(r \cdot s^{-1})^m+(-1)^1 \binom{m}{1}(r \cdot s^{-1})^{m-1}\alpha+ (-1)^2 \binom{m}{2}(r \cdot s^{-1})^{m-2}\alpha^2+ \ldots$$
$$ + (-1)^{m-1} \binom{m}{m-1}(r \cdot s^{-1})^{1}\alpha^{m-1}+\alpha^m=u$$

\noindent and so

$$r^m+(-1)^1\binom{m}{1}r^{m-1}s\alpha+(-1)^2\binom{m}{2}r^{m-2}s^2\alpha^2+$$
$$\ldots+(-1)^{m-1}r^1s^{m-1}\alpha^{m-1}+\alpha^m \cdot s^m=u \cdot s^m,$$

\noindent which is equivalent to
$$r^m=((-1)^0\binom{m}{1}r^{m-1}\cdot\alpha+(-1)^1\binom{m}{2}r^{m-2}\alpha^2s+$$
$$\ldots+(-1)^{m-2}r\alpha^{m-1}s^{m-2}+\alpha^m\cdot s^{m-1}+us^{m-1})\cdot s.$$

\medskip

\noindent As by assumption the field is of characteristic zero, we may consider with no harm of generality that the above equation is satisfied in $\mathbb{Z}$, whence it follows that $s$ divides $r^m$ for any $s\in \mathbb{Z}^*$, $r \in \mathbb{Z}$, which is manifestly false; for example, by considering $s=2$ and $r=3$. Hence, the characteristic of the field has to be some non-zero prime, say $p>0$. Besides, Proposition \ref{FinSRn} ensures that the field is of necessity countable, because $SR_n$ is countable and $\F\subseteq SR_n$.

Now, assume that the containment $GF(p^l)\subseteq \F$ is fulfilled for some integer $l$ satisfying the inequalities $2^t\neq p^l-1\leq n-1$ for any non-negative integer $t$. Thus, there exists an odd proper divisor $d$ of $p^l-1$. Further, take $q$ a non-negative odd integer such that $q$ and $p^l-1$ are co-prime. Then, $m=d\cdot q$ is odd and $d=gcd(p^l-1,m)$. Let the equation $x^m-1=0$ holds over $\F$. It has $d$ solutions in $GF(p^l)$, namely $1, h, h^2,\ldots, h^{d-1},$ for $h=g^{\frac{p^l-1}{d}},$ where g is a generator of the multiplicative cyclic group of the field $GF(p^l)$. Since $h^d=1,$ we have $$1+h+h^2+\ldots+h^{d-1}=0.$$ Therefore, $$-1=h+h^2+\ldots+h^{d-1}.$$ So, $-1$ is the sum of $d-1$ $m$-th roots of unity with each of them not equal to $1$. Since $d-1<d\leq p^l-1\leq n-1$, we detect that
$$-1=\alpha_1h+\alpha_2h^2+\ldots+\alpha_{d-1}h^{d-1}$$
with $$\alpha_1=\alpha_2=\ldots=\alpha_{d-1}=1,$$
and hence it follows from the proof of Lemma \ref{eq4omega} that
$$(-1-0)^m=\alpha_1^m+\alpha_2^m+\ldots+\alpha_{d-1}^m-0.$$
Also, $(-1)^m=d-1$, and since $m$ is odd, we deduce that $p$ divides $d$. But $d$ divides $p^l-1$. So, by the transitivity law, $p$ must divide $p^l-1$ and thus $p=1$, which is a contradiction.

Therefore, we have $GF(p^l)\subseteq \F$ for some integer $l$ with $2^t= p^l-1\leq n-1$ for some non-negative integer $t$. It follows now that $p-1$ and $r=p^{l-1}+p^{l-2}+\ldots +p+1$ are powers of $2$ and $p-1$ divides $r$. But since $p-1$ divides $r-l$, we receive that $p-1$ divides $l$. So, since $p\neq 2$, we deduce that $l=2u$ with $u$ a positive integer. That is why, $(p^u-1)(p^u+1)=2^t$ and hence $p^u-1$ divides $p^u+1$. Finally, $p^u-1$ divides $2$ and since $p\neq 2$, we obtain $p=3$ and $u=1$.

So, $l=2u=2$ and $GF(p^l)=GF(3^2)$. Let $g$ be a generator of the multiplicative group of $GF(9)$. Since $gcd(3^2-1,12)=4$, there are four $12$-th roots of unity in the field $GF(3^2)$. They are $\epsilon^{0}, \epsilon^6,\epsilon^{a_3}$ and $\epsilon^{a_4}$, where $\epsilon$ is a primitive $12$-th root of unity, whereas $a_3$ and $a_4$ are positive distinct integers less then $12$ and not equal to $6$. However, as $\frac{3^2-1}{4}=2$, we have
$$\{1, -1, \epsilon^{a_3}, \epsilon^{a_4} \}=\{1,g^2,(g^2)^2,(g^2)^3\}.$$
Since $(g^2)^4=1$, it follows that
$$1+g^2+(g^2)^2+(g^2)^3=0.$$ Consequently,
$$-1=-1+ \epsilon^{a_3}+\epsilon^{a_4}.$$
We thus derive two things: $\epsilon^{a_3}=-\epsilon^{a_4}$, and $-1$ is the sum of at most $n$ $12-$th roots of unity, because $3<3^2\leq n$. Therefore, imitating the proof of Lemma \ref{eq4omega}, taking into account that $12$ is even, it follows that
$$(0-(-1))^{12}=(-1)^6\cdot 1^{12}+(-1)^{a_3}\cdot 1^{12}+(-1)^{a_4}\cdot 1^{12}.$$
We thus inspect that the numbers $a_3$ and $a_4$ cannot be simultaneously odd or simultaneously even. In this way, without loss of generality, we can assume that $a_3$ is even and $a_4$ is odd, and since $\epsilon^{a_3}=-\epsilon^{a_4}$, we obtain that
$(-\epsilon)^{a_3}=(-\epsilon)^{a_4}$. However, because $\epsilon$ is not lying in the set $\{-1,0,1\}$, while $a_3$ and $a_4$ are positive integers less then $12$, it follows automatically that $a_3=a_4$, which is a new contradiction.

Now, in order to demonstrate that $\F$ is an algebraic extension of its minimal simple (finite) subfield, we will prove that every non-zero element of $\F$ is a root of unity. In fact, owing to Proposition \ref{FinSRn}, we have that $\F\in SR_n$, the set of all sums of at most $n$ roots of unity. Also, according to Lemma $\ref{SRnWm}$, we get that $SR_n\subseteq \cup_{m\in \mathbb{N},m\geq 2}W_m$, whereas Lemma $\ref{eq4omega}$ enables us that if $x\in W_m$, then there exist $u\in \F_p$ and $\alpha\in \F_p$ such that $(x-\alpha)^m=u$. Likewise, the proofs of the mentioned lemmas tell us that if $x$ is the sum of at most $n$ $m$-th roots of unity within there are exactly $\alpha$ values of $1$, then $(x-\alpha)^m\in\F_p$. So, we may assume that $x=\alpha$. If $x\neq 0$, then $x^{p-1}=1$ and thus $x$ is a root of unity. If, however, $x$ is the sum of at most $n$ roots of unity within there are no values of $1$, then $(x-\alpha)^{m}\in \F_p-\{0\}$. Hence, $x=\alpha+y$ with $\alpha^{p-1}=1$ and $y^{m(p-1)}=1$. If $\alpha\neq 1$, then $x$ is the sum of two roots of unity not equal to $1$. So, $x^{m(p-1)}\in \F_p-\{0\}$ and, therefore, $x^{m(p-1)^2}=1$ Thus, $x$ is a root of unity. Now take $\alpha=1$. But as $$x=1\cdot 1+y=(p-1)\cdot (-1)+y=((n-1)\cdot(-1)+y)+(p-n)\cdot (-1),$$
we have that $(n-1)(-1)+y$ is the sum of $n-1$ roots of unity equal to $(-1)$ and one $m(p-1)-th$ root of unity not equal to $1$. It follows now that $((n-1)\cdot(-1) +y-1\cdot0)^{m(p-1)}=1$. Moreover, if $(p-n)\cdot (-1)=1$, then $p$ divides $n-1$. But $n-1\leq p-1$, so that $p\leq p-1$ which is false. By this contradiction we derive that $x$ is the sum of two $m(p-1)$-th roots of unity not equal to $1$. Consequently, $x^{m(p-1)}\in \F_p-\{0\}$, whence $x^{m(p-1)^2}=1$. Finally, $x$ is a root of unity, as promised. In conclusion, $\F$ is a countable field of positive characteristic, which is an algebraic extension of its minimal simple (finite) subfield, with all subfields of order greater than $n$, as desired.

$(2)\implies (1)$. Let $GF(p^l)$ be contained in $\F$. Take $C_1$ to be an $n\times n$ companion matrix over $GF(p^l)$. Then, as $p^l\geq n+1$, it follows by the application of the main result from \cite{LAMA} that we may decompose $C_1=D_1+N_1$, where $D_1$ is a diagonalizable matrix over $GF(p^l)$ with no multiple eigenvalues and $N_1$ is a nilpotent matrix of nilpotence index $2$ over $GF(p^l)$. Now, Lemma \ref{ADiagIsNonderog} helps us to have that $D_1$ is a non-derogatory potent matrix, and since $\mbox{trace}~N=0$ we can get that $\mbox{trace}~C_1=\mbox{trace}~D_1$. Therefore, any element in $GF(p^l)$ can be the trace of a non-derogatory potent matrix. Hence, each element in $GF(p^l)$ can be the trace of a potent companion matrix.

Now, according to Proposition~\ref{fields}, every countable field of positive characteristic, which is an algebraic extension of its minimal simple (finite) subfield, is a countable union of its finite subfields, it follows at once that every element in $\F$ can be the trace of an $n\times n$ potent companion matrix over $\F$. Thus, letting $C$ be an arbitrary $n\times n$ companion matrix over $\F$. Then, $\mbox{trace}~C\in ST_n$. In conclusion, applying Proposition \ref{wp2compmat}(2), one infers that $C$ can be written as a sum of a potent matrix and a nilpotent matrix of order $2$ over $\F$, as wanted.
\end{proof}

We now come to the case when the potent and square-zero matrices commute each other.

\subsection{The commuting case}

We will attack here the commuting case of weakly periodic with nilpotence index $2$ companion matrices, that is, the existing potent and square-zero nilpotent matrices commute each other. To that aim, we first need the following two technical conventions.

\begin{lemma}\label{rootsOfUnityLem}
Let $n$ be a positive integer and $\F$ a field. If, for every $n\times n$ companion matrix $C$ over $\F$, there exist an integer $t>1$, a potent matrix $P$ such that $P=P^t$ and a square-zero nilpotent $N$ such that $C=P+N$ with $PN=NP$, then the following two points are true:
\begin{enumerate}
\item If $C$ is invertible, then $\chi_C$ divides $(X^{t-1}-1)^2$.
\item If $\lambda_1,\lambda_2,\dots,\lambda_n$ are non-zero elements of $\F$, then there exists an integer $t>1$ such that $\lambda_i^{t-1}=1$ for every $i\in \{1,2,\dots,n\}$.
\end{enumerate}
\end{lemma}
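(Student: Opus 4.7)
My plan is to leverage the commuting decomposition $C = P + N$ (with $PN = NP$, $N^{2} = 0$, $P^{t} = P$) so that powers of $C$ open up via the binomial formula, and then to show that in the invertible case the "potent part" $P^{t-1}$ must collapse to the identity. Once that is achieved, $(C^{t-1} - I)^{2}$ reduces to a multiple of $N^{2}$ and so vanishes, and since a companion matrix is non-derogatory this upgrades the resulting minimal-polynomial divisibility to $\chi_{C} \mid (X^{t-1} - 1)^{2}$. Part~(2) will follow at once from part~(1) applied to a single, carefully selected companion matrix.

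For part~(1) I would set $E := P^{t-1}$; the relation $P^{t} = P$ implies $E^{2} = P^{2(t-1)} = P^{t-1} = E$ and $EP = P$, so $(I-E)P = 0$. Because $N$ commutes with $P$ it also commutes with $E$, hence
\[
(I - E)\,C \;=\; (I - E)(P + N) \;=\; (I - E)\,N.
\]
Squaring this identity, using that $I - E$ commutes with $C$ and that $(I-E)^{2} = I - E$, gives
\[
(I - E)\,C^{2} \;=\; [(I - E)\,C]^{2} \;=\; [(I - E)\,N]^{2} \;=\; (I - E)^{2}\,N^{2} \;=\; 0,
\]
so invertibility of $C$ (equivalently, of $C^{2}$) forces $I - E = 0$, i.e.\ $P^{t-1} = I$. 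The binomial formula, licit thanks to $PN = NP$ and $N^{2} = 0$, then yields $C^{t-1} = P^{t-1} + (t-1)\,P^{t-2}\,N = I + (t-1)\,P^{t-2}\,N$, whence $(C^{t-1} - I)^{2} = (t-1)^{2}\,P^{2(t-2)}\,N^{2} = 0$. Since $C$ is a companion matrix, its minimal polynomial coincides with $\chi_{C}$, so $\chi_{C} \mid (X^{t-1} - 1)^{2}$.

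For part~(2), given non-zero $\lambda_{1},\ldots,\lambda_{n} \in \F$, I would form $f(X) := \prod_{i=1}^{n}(X - \lambda_{i})$ and consider its $n \times n$ companion matrix $C_{f}$, whose characteristic polynomial equals $f$ and whose determinant $\pm\prod_{i}\lambda_{i}$ is non-zero. The hypothesis applied to $C_{f}$ supplies a single integer $t > 1$ together with a decomposition $C_{f} = P + N$ of the required shape; part~(1) then yields $f \mid (X^{t-1} - 1)^{2}$, and each $\lambda_{i}$, being a root of the right-hand side, satisfies $\lambda_{i}^{t-1} = 1$ with this uniform $t$.

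The principal obstacle is the intermediate claim $P^{t-1} = I$ in the invertible case. The reflexive approach is to extend scalars to $\overline{\F}$, simultaneously triangularize the commuting pair $(P, N)$, and read off the (non-zero) eigenvalues of $P$ from the diagonal of $C$; the coordinate-free route above, through the complementary idempotent $I - E$, reaches the same conclusion using only the algebraic identities $E^{2} = E$, $(I-E)P = 0$, $N^{2} = 0$, and the commutativity of $E$ with $C$, and hence requires no extension of the base field.
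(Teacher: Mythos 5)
Your proof is correct, and both parts end where the paper ends: you reduce part (1) to the identity $(C^{t-1}-I)^2=0$ and then invoke the fact that a companion matrix is non-derogatory, and your part (2) is the paper's argument verbatim (apply part (1) to the companion matrix of $\prod_i(X-\lambda_i)$, which is invertible since all $\lambda_i\neq 0$). The route to $(C^{t-1}-I)^2=0$ is, however, genuinely different. The paper works directly with the relation $(C-N)^t=C-N$, expands by the binomial formula, multiplies through by $C^{t-1}+tN$, and squares to kill the remaining $N$-term, finally cancelling powers of the invertible $C$; the intermediate displayed identities there contain some slips (e.g.\ $tCN$ where $tC^{t-1}N$ is meant), though the endpoint is sound. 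You instead isolate the idempotent $E=P^{t-1}$, observe $(I-E)C=(I-E)N$ and hence $(I-E)C^2=0$, conclude $P^{t-1}=I$ from invertibility, and only then expand $C^{t-1}=(P+N)^{t-1}=I+(t-1)P^{t-2}N$. This buys a structurally cleaner argument: it makes explicit that the potent part of an invertible commuting decomposition is itself invertible of multiplicative order dividing $t-1$, it avoids the paper's cancellation gymnastics, and it needs no extension of scalars. The paper's version is more self-contained in the sense of never naming the idempotent, but yours is easier to verify line by line.
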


\begin{proof}
(1) Since $C=P+N$ and $PN=NP$, it follows that $CP=PC$ and $CN=NC$. But we have $(C-N)^t=C-N$. Since $CN=NC$, we can apply Newton's binomial formula to argue that there exists an $n\times n$ matrix over $\F$ such that
$$C^t-tCN+MN^2=C-N.$$ But as $N^2=0$, we obtain that
$$C(C^{t-1}-tN)=C-N.$$
Multiplying with $C^{t-1}+tN$, we get that
$$C(C^{2t-2}-t^2N^2)=C^t+tCN-C^{t-1}N+tN^2,$$ and using that $N^2=0$, we write the equality
$$C^{2t-1}=C^t+(tC-C^{t-1})N.$$ But $C$ is invertible, and so
$$C^{2t-2}-C^{t-1}=(tId_n-C^{t-2})N.$$ Now, bearing in mind that $CN=NC$ and $N^2=0$, we infer
$$((C^{t-1})^2-C^{t-1})^2=0,$$ and since $C$ is invertible, we conclude
$$(C^{t-1}-1)^2=0.$$ Therefore, the minimal polynomial of $C$ divides $(X^{t-1}-1)^2$. But the minimal polynomial of $C$ is the characteristic polynomial, say $\chi_C$, of $C$. Summarizing all the information so far, $\chi_C$ divides $(X^{t-1}-1)^2$, as promised.

(2) Just take $C$ to be the companion matrix with eigenvalues $\lambda_1,\lambda_2,\dots,\lambda_n$ and apply the preceding point (1) along with the fact that $\lambda_1,\lambda_2,\dots,\lambda_n$ are from $\F$.
\end{proof}

\begin{lemma}\label{fixedPointsLem}
Let $n$ be a positive integer and $\F$ a field.
If, for every $n\times n$ companion matrix $C$ over $\F$, there exist an integer $t>1$, a potent matrix $P$ such that $P=P^t$ and a square zero nilpotent $N$ such that $C=P+N$ with $PN=NP$, then the following two conditions are valid:
\begin{enumerate}
\item If $C$ is invertible, then there exists a polynomial $q\in \F[X]$ such that $\chi_C$ divides $(q(X)-X)^2$.
\item If $\lambda_1,\lambda_2,\dots,\lambda_n$ are non-zero elements of $\F$, then there exists a polynomial $q\in \F[X]$ such that $q(\lambda_i)=\lambda_i$ for every $i\in \{1,2,\dots,n\}$.
\end{enumerate}
\end{lemma}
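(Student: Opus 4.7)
My plan is to mirror the structure of Lemma \ref{rootsOfUnityLem}, but to stop one algebraic step earlier: instead of cancelling a factor of $C^2$ to reach $(C^{t-1}-1)^2=0$, I will exhibit the identity $(C^t-C)^2=0$ directly, which is precisely the statement that $\chi_C$ divides $(X^t-X)^2$. Taking $q(X):=X^t$ will then finish part (1), and part (2) will follow by applying part (1) to the companion matrix with prescribed eigenvalues.

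For part (1), I start from the hypotheses $C=P+N$, $P^t=P$, $N^2=0$, $PN=NP$. From $C=P+N$ and $PN=NP$ one gets $CN=NC$, so the binomial theorem applies to $(C-N)^t$. Since $P=C-N$, the identity $P^t=P$ becomes $(C-N)^t=C-N$, and the hypothesis $N^2=0$ kills every term of degree $\geq 2$ in $N$ in the binomial expansion, yielding
\[
C^t - tC^{t-1}N \;=\; C-N.
\]
Rearranging gives $C^t-C=(tC^{t-1}-I)N$. Squaring both sides and using that $tC^{t-1}-I$, being a polynomial in $C$, commutes with $N$, together with $N^2=0$, yields $(C^t-C)^2=0$. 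Because $C$ is a companion matrix, its minimal polynomial equals its characteristic polynomial $\chi_C$, so $\chi_C$ divides $(X^t-X)^2$. Setting $q(X):=X^t\in\F[X]$, this is exactly $\chi_C\mid (q(X)-X)^2$, as desired. (I observe in passing that invertibility of $C$ is not used in the derivation and is presumably included only for parallelism with Lemma \ref{rootsOfUnityLem}.)

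For part (2), I would apply part (1) to the companion matrix $C$ of the polynomial $\prod_{i=1}^n(X-\lambda_i)$, whose roots are exactly $\lambda_1,\dots,\lambda_n$. Since each $\lambda_i$ is nonzero, $C$ is invertible, so part (1) furnishes a polynomial $q\in\F[X]$ with $\chi_C\mid(q(X)-X)^2$. Evaluating the divisibility at each root of $\chi_C$ gives $(q(\lambda_i)-\lambda_i)^2=0$ in $\F$, hence $q(\lambda_i)=\lambda_i$ for every $i\in\{1,2,\dots,n\}$.

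There is no genuine obstacle here; the only point requiring care is ensuring that the binomial expansion of $(C-N)^t$ really does collapse, which uses both $CN=NC$ and $N^2=0$, and that the squaring step $\bigl((tC^{t-1}-I)N\bigr)^2=(tC^{t-1}-I)^2N^2$ is valid, which uses the same commutativity. The remaining input is the standard fact that a companion matrix is non-derogatory so that its minimal and characteristic polynomials coincide.
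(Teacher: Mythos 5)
Your proof is correct, but it takes a genuinely different route from the paper's. The paper does no computation at all: it observes that $P=C-N$ commutes with $C$, invokes the standard fact that the centralizer of a companion (hence non-derogatory) matrix consists exactly of polynomials in that matrix, writes $P=q(C)$ for some $q\in\F[X]$, and reads off $(q(C)-C)^2=N^2=0$, whence $\chi_C=\mu_C$ divides $(q(X)-X)^2$. You instead re-run the binomial computation of Lemma \ref{rootsOfUnityLem}, stopping before the cancellation, and take $q(X)=X^t$ explicitly; your steps $(C-N)^t=C^t-tC^{t-1}N$, $C^t-C=(tC^{t-1}-I)N$, and $\bigl((tC^{t-1}-I)N\bigr)^2=(tC^{t-1}-I)^2N^2=0$ are all valid, your remark that invertibility of $C$ is never used is accurate, and part (2) follows as you say. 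What each approach buys: yours is more elementary (no appeal to the centralizer theorem) and yields a concrete $q$ with prime-field coefficients; the paper's yields the polynomial that actually realizes the potent summand $P$, which is the conceptual point of stating this lemma separately. Indeed, with your choice $q(X)=X^t$ the condition $q(\lambda_i)=\lambda_i$ for nonzero $\lambda_i$ is just $\lambda_i^{t-1}=1$, so your version of part (2) is literally Lemma \ref{rootsOfUnityLem}(2) restated; since the statement only asserts the existence of \emph{some} $q$, this is still a complete proof, but it renders the lemma redundant relative to the previous one, whereas the paper's argument produces a potentially different $q$.
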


\begin{proof}
(1) Since $C=P+N$ and $PN=NP$, it follows that $CP=PC$ and $CN=NC$. It is well known that all matrices that commute with a companion matrix $C$ can be interpreted just as polynomials in $C$ over $\F$. So, there exists $q\in \F[X]$ with $C=q(C)+N$. Now, since $N^2=0$, it follows that $(q(C)-C)^2=0$. Therefore, the minimal polynomial of $C$ obviously divides $(q(X)-X)^2$. But the minimal polynomial of $C$ is the characteristic polynomial, say $\chi_C$, of $C$. Summarizing all the information thus far, $\chi_C$ divides $(q(X)-X)^2$, as asked for.

(2) Just take $C$ to be the companion matrix with eigenvalues $\lambda_1,\lambda_2,\dots,\lambda_n$ and employ the preceding condition (1) together with the fact that $\lambda_1,\lambda_2,\dots,\lambda_n$ are from $\F$.
\end{proof}

The following claim from number theory is a well-known folklore fact, but we state it here only for the sake of completeness and the reader's convenience.

\begin{lemma}\label{gcd}
Let $a,b,c$ be three positive integers. Then, $gcd(bc,a)$ divides $gcd(b,a)\cdot gcd(c,a)$.
\end{lemma}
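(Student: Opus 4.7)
The cleanest route is to apply B\'ezout's identity twice. First I would invoke B\'ezout to write $\gcd(b,a)=bx_1+ay_1$ and $\gcd(c,a)=cx_2+ay_2$ for some integers $x_1,y_1,x_2,y_2\in\Z$. Then I would expand the product
\[
\gcd(b,a)\cdot\gcd(c,a)=(bx_1+ay_1)(cx_2+ay_2)=bc(x_1x_2)+a\bigl(bx_1y_2+cx_2y_1+ay_1y_2\bigr).
\]
The first summand is a multiple of $bc$, and the second summand is a multiple of $a$. Since $\gcd(bc,a)$ divides both $bc$ and $a$ by definition, it divides each summand and therefore divides the sum, which is exactly $\gcd(b,a)\cdot\gcd(c,a)$. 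This gives the claim in one line of algebra.

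If a more hands-on argument is preferred, one can instead verify the inequality $p$-adically: for every prime $p$, setting $\alpha=v_p(a)$, $\beta=v_p(b)$, $\gamma=v_p(c)$, the statement reduces to showing
\[
\min(\beta+\gamma,\alpha)\le \min(\beta,\alpha)+\min(\gamma,\alpha).
\]
This is then a short case split according to whether $\beta+\gamma\le\alpha$ (both minima on the right equal $\beta$ and $\gamma$ respectively, so the right-hand side is $\beta+\gamma$) or $\beta+\gamma>\alpha$ (if either $\beta\ge\alpha$ or $\gamma\ge\alpha$ then the right-hand side is already $\ge\alpha$; otherwise both $\beta,\gamma<\alpha$ and the right-hand side equals $\beta+\gamma>\alpha$).

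There is no real obstacle: the only mildly subtle point in the B\'ezout approach is recognizing that one term carries $bc$ while the rest carry $a$, so that the common divisor $\gcd(bc,a)$ absorbs everything. I would present the B\'ezout proof as the primary one, as it avoids case analysis and uses no machinery beyond a single application of the extended Euclidean algorithm.
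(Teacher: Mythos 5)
Your primary B\'ezout argument is correct and is essentially the same as the paper's proof: both multiply the two B\'ezout identities and observe that $\gcd(bc,a)$ divides each resulting term since it divides both $a$ and $bc$. The $p$-adic alternative is a valid bonus but not what the paper does.
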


\begin{proof}
Let we set $f=gcd(a,bc)$, $f_1=gcd(a,b)$ and $f_2=gcd(a,c)$. Then, there exist integers $s_1,s_2,t_1,t_2$ such that
$$f_1=s_1a+t_1b$$
and
$$f_2=s_2a+t_2c.$$
Thus,
$$f_1f_2=s_1s_2a^2+s_1t_2ac+t_1s_2ba+t_1t_2bc.$$
But $f\mbox{ divides }a$ and $f\mbox{ divides }bc$, so $f\mbox{ divides }a^2$, $f\mbox{ divides }ac$, $f\mbox{ divides }ba$ and $f\mbox{ divides }bc$. Hence, $f\mbox{ divides }f_1f_2$ whence $gcd(bc,a)$ divides $gcd(b,a)\cdot gcd(c,a)$, as formulated.
\end{proof}

The next comments are needed to explain the complicated situation in the commuting case.

\begin{remark}\label{remCommuting}
Let $\lambda_1, \lambda_2,\dots, \lambda_n$ be distinct non-zero elements of $\F$. By Lemma \ref{fixedPointsLem}(2), there exists $q\in \F[X]$ such that $q(\lambda_i)=\lambda_i$ for every $i\in \{1,2,\dots,n\}$. Take $GF(p^{l_1})$ such that $\lambda_1, \lambda_2,\dots, \lambda_n$ are in $GF(p^{l_1}),$ $GF(p^{l_2})$ and such that $q\in GF(p^{l_2})[X]$ with $l=max(l_1,l_2)$. Then, $\lambda_1, \lambda_2,\dots, \lambda_n$ are in $GF(p^{l})$, while $q\in GF(p^{l})[X]$.

Furthermore, since $\lambda_1, \lambda_2,\dots, \lambda_n$ are in $\F$, it follows from Lemma \ref{rootsOfUnityLem}(2) that there exists a non-negative integer $t>1$ such that $\lambda_i^{t-1}=1$ for every $i\in \{1,2,\dots,n\}$. Now, since $\lambda_1, \lambda_2,\dots, \lambda_n$ are in $GF(p^l)$, we can infer that $$\{\lambda_1, \lambda_2,\dots, \lambda_n\}\subseteq \{h,h^2,\dots, h^d=1\},$$ where $h=g^{\frac{p^l-1}{d}}$ with $g$ a generator of the multiplicative group of $GF(p^l)$, and $d=gcd(p^{l}-1,t-1)$. Therefore, $n\leq d$ and there exist $\{j_1,j_2,\dots,j_n\}\subseteq\{1,2,\dots,d\}$ such that $\lambda_i=h^{j_i}$. Finally, we extract that $q(h^{j_i})=h^{j_i}$ for every $i\in \{1,2,\dots,n\}$ with $h^d=1$, as expected.
\end{remark}

The next example illustrates some concrete aspects of our calculating manipulations.

\begin{example}
If in the previous Remark~\ref{remCommuting} we take $\lambda_1=g^i$ and $\lambda_2=g^{i+1}$ with $i\in \{1,2,\dots,p^l-2\}$, then one inspects that
$$\{g^i,g^{i+1}\}\subseteq \{g^{\frac{p^l-1}{d}},g^{2\frac{p^l-1}{d}},\dots,g^{d\frac{p^l-1}{d}}\}.$$
Therefore,
$$\{i,i+1\}\subseteq \{ \frac{p^l-1}{d},2\frac{p^l-1}{d},\dots,d\frac{p^l-1}{d}\}. $$
Hence, the ordinary fraction $\frac{p^l-1}{d}$ is a common divisor of both $i$ and $i+1,$ so it is necessarily $1$. We thus have now that $p^l-1=d=gcd(p^l-1,t-1)$. In conclusion, $p^l-1$ divides $t-1$, as desired to demonstrate.
\end{example}

We are now ready to establish our second main result.

\begin{thm} Suppose $n\geq 1$ is an integer and $\F$ is a field. If all companion $n\times n$ matrices over $\F$ are the sum of a potent matrix and a square-zero matrix over $\F$ which matrices commute each other, then $\F$ is a finite field of order greater than $n$.
\end{thm}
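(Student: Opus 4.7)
The plan is to leverage Theorem~\ref{main}: since the commuting decomposition hypothesis strictly strengthens the non-commuting one, Theorem~\ref{main} already provides that $\F$ has positive characteristic $p$, is a countable algebraic extension of its prime subfield $\F_p$, and has every finite subfield of order exceeding $n$. What remains is to upgrade ``countable'' to ``finite'', which is the distinctive content of the commuting case.

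I would proceed by contradiction: suppose $\F$ is infinite. By Proposition~\ref{fields}, $\F = \cup_{k \geq 1} GF(p^{l_k})$ with $l_k$ dividing $l_{k+1}$ and $l_k \to \infty$. For a large $l = l_k$ in this sequence, pick a generator $g$ of $GF(p^l)^*$ and form the $n \times n$ companion matrix $C$ whose characteristic polynomial is $\prod_{i=1}^n (X-g^i)$; its eigenvalues $g, g^2, \ldots, g^n$ are distinct and nonzero, using that $p^l - 1 > n$. Applying the commuting hypothesis produces $t > 1$ and a decomposition $C = P + N$ with $P^t = P$, $N^2 = 0$ and $PN = NP$. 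By Lemma~\ref{fixedPointsLem}(1), there is $q \in \F[X]$ with $\chi_C$ dividing $(q(X) - X)^2$; reducing $q$ modulo $\chi_C$ via Cayley--Hamilton we may assume $\deg q < n$, and then $q(g^i) = g^i$ on the $n$ distinct values $g^i$ forces $q = X$. Consequently $P = q(C) = C$, $N = 0$, so $C$ is itself potent, and the preceding Example yields $p^l - 1 \mid t - 1$.

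The hard step is to turn this into a genuine contradiction, since $p^l - 1 \mid t - 1$ is by itself satisfiable for an unboundedly large $t$. My plan is to exploit the freedom to choose a single companion matrix whose eigenvalue set simultaneously contains consecutive powers of generators of two distinct subfields $GF(p^{l_1}), GF(p^{l_2}) \subseteq \F$, thereby imposing two divisibilities $p^{l_i} - 1 \mid t - 1$, and to apply Lemma~\ref{gcd} to $\gcd((p^{l_1}-1)(p^{l_2}-1), t-1)$ against the degree bound $\deg q < n$ (which is independent of the $l_i$) to force an incompatibility. The main obstacle is precisely this combinatorial/number-theoretic step: the Example alone only gives lower bounds on $t$ associated with one subfield at a time, and one must stack two such constraints inside one matrix to trap $t - 1$ between an $\F$-dependent lower bound that grows without bound and a bound coming from the non-derogatory structure of $\F[C]$ that is controlled only by $n$, a tension that can fail to be satisfiable exactly when $\F$ is infinite. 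Once this is achieved, the cardinality bound $|\F| > n$ is inherited directly from Theorem~\ref{main}.
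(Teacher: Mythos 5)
Your opening moves are sound and track the paper's own strategy: you invoke Theorem~\ref{main} to obtain positive characteristic, countability and the order condition, reduce to ruling out an infinite $\F=\cup_{k\geq 1}GF(p^{l_k})$ via Proposition~\ref{fields}, and correctly observe that a companion matrix whose spectrum consists of $n$ distinct nonzero powers of a generator forces $q=X$ in Lemma~\ref{fixedPointsLem}, hence $N=0$, hence $C$ itself potent with $p^{l}-1\mid t-1$. The gap is that the decisive contradiction is never actually produced, and the plan you sketch for it would not work. Packing generators of two subfields $GF(p^{l_1})$ and $GF(p^{l_2})$ into the spectrum of one $n\times n$ matrix only yields divisibility constraints \emph{from below} on the single integer $t$ attached to that one matrix (essentially that the least common multiple of $p^{l_1}-1$ and $p^{l_2}-1$ divides $t-1$), and such constraints are always satisfiable by taking $t$ large; there is no competing upper bound coming from $\deg q<n$, since $q=X$ has degree $1$ and is precisely what your own argument shows $q$ to be. Worse, splitting the $n$ eigenvalue slots between two generators leaves you with fewer than $n$ prescribed roots of $q(X)-X$, so you can no longer even force $q=X$.

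The paper closes the argument differently, and the difference is essential: it chooses one matrix over the \emph{smallest} field $GF(p^{l_1})$ of the tower, thereby fixing one integer $t$ once and for all (taken minimal among all admissible decompositions of that matrix), and then plays this single fixed $t$ against every floor of the infinite tower. Setting $d_i=\gcd(p^{l_i}-1,t-1)$ and writing the fixed exponent $[\frac{p^{l_1}-1}{n}]$ as $j_i\cdot\frac{p^{l_i}-1}{d_i}$, it shows with the help of Lemma~\ref{gcd} that $j_{i+1}$ divides $j_i$, so the sequence $(j_i)$ stabilizes; stabilization forces $s_i=\frac{p^{l_{i+1}}-1}{p^{l_i}-1}$ to divide $t-1$, whence $p^{l_i}<s_i\leq t-1$ for all large $i$, which is absurd because $t$ is fixed while $l_i\to\infty$. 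In short, the contradiction must come from one fixed $t$ confronted with an unbounded ascending chain of subfields, not from two subfields confronted with one matrix; this is the idea missing from your proposal.
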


\begin{proof}
Suppose $n\geq 1$ is an integer, $\F$ is a field and all companion $n\times n$ matrices over $\F$ are the sum of a potent matrix and a square-zero matrix over $\F$ which commute. Then, by Theorem \ref{main}, we have that $\F$ is a countable field of positive characteristic (which is also an algebraic extension of a finite field). Assume now the contrary that $\F$ is infinite. Thus, the second part in the statement of Proposition~\ref{fields} applies to write that $\F=\cup_{i=1}^\infty GF(p^{l_i})$, where $l_i$ divides $l_{i+1}$ for every positive integer $i$. Therefore, there exists the sequence of integers greater than $1$, say $(c_i)_{i\geq 1}$ such that $l_{i+1}=l_i\cdot c_i$ and $(l_i)_{i\geq 1}$ is a strictly increasing infinite sequence of positive integers.

Take $g_1$ to be the generator of the multiplicative group of $GF(p^{l_1})$ and put $$\lambda_1=g_1^{[\frac{p^{l_1}-1}{n}]}, \lambda_2=g_1^{2\cdot [\frac{p^{l_1}-1}{n}]},\dots ,
\lambda_n=g_1^{n\cdot [\frac{p^{l_1}-1}{n}]}.$$
Consequently, Lemma~\ref{rootsOfUnityLem} allows us to have the existence of a positive integer $t>1$ such that $$\lambda_1^{t-1}=\lambda_2^{t-1}=\dots=\lambda_n^{t-1}=1.$$
Let $C$ be the companion matrix with eigenvalues $\lambda_1,\lambda_2,\dots, \lambda_n$. Note that the integer $t$ above has the property that $C=P+N$ with $P^t=P$, $N^2=0$ and $PN=NP$. If, eventually, there are more than one such decompositions of $C$, then we can take $t$ to be the minimum of such integers $t$'s. So, $t$ can be chosen to be a fixed uniquely determined integer having the mentioned property.

Furthermore, since we are working in $GF(p^{l_1})$, Remark \ref{remCommuting} leads us to the relation
$$\{g_1^{[\frac{p^{l_1}-1}{n}]}, g_1^{2\cdot [\frac{p^{l_1}-1}{n}]},\dots ,
g_1^{n\cdot [\frac{p^{l_1}-1}{n}]}\}\subseteq \{g_1^{\frac{p^{l_1}-1}{d_1}},g_1^{2\frac{p^{l_1}-1}{d_1}},\dots, g_1^{d_1\frac{p^{l_1}-1}{d_1}}\},$$
where $d_i=gcd(p^{l_i}-1,t-1)$. Take $j_i\in \{1,2,\dots,d_i\}$ such that $$g_1^{[\frac{p^{l_1}-1}{n}]}=g_1^{j_1\cdot\frac{p^{l_1}-1}{d_1}},$$ and since the order of $g_1$ in the  multiplicative group of $GF(p^{l_1})$ is $p^{l_1}-1$, it follows that
$$[\frac{p^{l_1}-1}{n}]=j_1\cdot\frac{p^{l_1}-1}{d_1}.$$
Analogously, we obtain that $$[\frac{p^{l_1}-1}{n}]=j_i\cdot\frac{p^{l_i}-1}{d_i},$$ for any positive integer $i$.
Therefore, $$j_i\cdot \frac{p^{l_i}-1}{d_i}=j_{i+1}\cdot \frac{p^{l_{i+1}}-1}{d_{i+1},}$$
and so $$\frac{j_i}{j_{i+1}}=\frac{d_i}{d_{i+1}}\cdot \frac{(p^{l_i})^{c_i}-1}{p^{l_i}-1}.$$
Take $s_i=(p^{l_i})^{c_i-1}+(p^{l_i})^{c_i-2}+\dots+p^{l_i}+1.$ It thus follows that
$$\frac{j_i}{j_{i+1}}=\frac{gcd(p^{l_i}-1,t-1)}{gcd((p^{l_i})^{c_i}-1,t-1)}\cdot s_i.$$
Also, by Lemma \ref{gcd} we have that there exists a strictly positive integer $k_i$ such that
$$gcd((p^{l_i})^{c_i}-1,t-1)=\frac{gcd(p^{l_i}-1,t-1)\cdot gcd(s_i,t-1))}{k_i}.$$
Now, we obtain that
$$\frac{j_i}{j_{i+1}}=k_i\cdot \frac{s_i}{gcd(s_i,t-1)}.$$

\medskip

However, since $gcd(s_i,t-1)$ divides $s_i$, it follows that $j_{i+1}$ divides $j_i$ for every positive integer $i\geq 1$. But $j_i\geq 1$ and thus there exists $i_0\geq 1$ such that $j_i=j_{i+1}$ for every $i\geq i_0$. So, we obtain
$$k_i\cdot \frac{s_i}{gcd(s_i,t-1)}=1,$$ which forces $s_i=gcd(s_i,t-1)$.
Hence, $s_i$ divides $t-1$ and so $s_i\leq t-1$. But $p^{l_i}<s_i$ and then $p^{l_i}<t-1$, for every $i\geq i_0$,
which is in sharp contradiction with the fact that $(l_i)_{i\geq 1}$ is a strictly increasing infinite sequence of positive integers. In conclusion, one has that $\F$ is finite, as claimed. We next once again apply Theorem~\ref{main} to get that the order of the field is greater than $n$, as asserted.
\end{proof}

In the spirit of the last result, we pose the following conjecture.

\medskip

\noindent{\bf Conjecture:} Given $n\geq 1$ is an integer and $\F$ is a field. Then all companion $n\times n$ matrices over $\F$ are the sum of a potent matrix and a square-zero matrix over $\F$ which matrices commute each other if, and only if, $\F$ is a finite (and hence potent) field and $n=1$.

\medskip

On the other side, in regard to \cite{BCDM} and \cite{KLZ}, we close our work with the following question of some interest.

\begin{prob} Suppose that $D$ is a division ring and $n\geq 1$ is an integer. Does it follow that the matrix ring $M_n(D)$ is weakly periodic if, and only if, $D$ is a finite (and hence potent) field?
\end{prob}

\medskip

\noindent{\bf Funding:} The scientific work of Peter V. Danchev was partially supported by the Bulgarian National Science Fund under Grant KP-06 No 32/1 of December 07, 2019 and by the Junta de Andaluc\'ia, FQM 264.

\medskip


\vskip2pc

\end{document}